\newtheorem{thm}{Theorem}[section]
\newtheorem{prop}[thm]{Proposition}
\newtheorem{cor}[thm]{Corollary}
\newcommand{\thmref}[1]{Theorem~\ref{#1}}
\newcommand{\rmkref}[1]{Remark~\ref{#1}}
\newcommand{\propref}[1]{Proposition~\ref{#1}}
\newcommand{\corref}[1]{Corollary~\ref{#1}}
\theoremstyle{remark}
\newtheorem{rmk}{Remark}[section]
\newenvironment{acknowledgements}{\bigskip\textbf{Acknowledgements.}}{}
\newcommand{\nequiv}{\not\equiv}
\renewcommand{\geq}{\geqslant}
\renewcommand{\leq}{\leqslant}
\begin{document}

\title
{Non-vanishing of  Hilbert Poincar\'e series}
\author[M. Kumari]{Moni Kumari}
\address{National Institute of Science Education and Research, HBNI, Bhubaneswar, Via-Jatni, Khurda, Odisha, 752050, India.}

\email{moni.kumari@niser.ac.in}

\date{\today}

\subjclass[2010]{Primary 11F41; Secondary 11F30.}

\keywords{Hilbert modular forms, Poincar\'e series, non-vanishing}
\begin{abstract}
We prove some non-vanishing results of Hilbert Poincar\'e series.  
We derive these results, by showing that the Fourier coefficients of Hilbert Poincar\'e series satisfy some nice orthogonality relations for sufficiently large weight as well as for sufficiently large level.
To prove later results, we generalize a method of E. Kowalski et. al.
\end{abstract}
\maketitle

\section{Introduction}
The vanishing or non-vanishing of Poincar\'e series of integral weight for the group $SL_{2}(\mathbb{Z})$ is a mysterious problem. Here, there is a conjecture that none of the Poincar\'e series  vanish. 
Several papers have appeared to investigate this conjecture. The first non-trivial result towards this question was given by R. A. Rankin \cite{rankin} in 1980, who showed that 
there are constants $B>4\log{2}$ and $k_0$ such that the Poincar\'e series $\mathcal{P}_{n,k,1}(z)$ does not vanish identically for 
\begin{equation}
n\leq k^2 \exp \bigg(\frac{-B\log k}{\log \log k}\bigg),
\end{equation}
provided $k\geq k_0,$ where $\mathcal{P}_{n,k,1}(z)$  is the $n$-th Poincar\'e series of integral weight $k$ and of level $1$. 
His proof uses the fact that the Fourier coefficients of Poincar\'e series has an explicit formula as an infinite series involving Bessel functions and Kloosterman sums and sharp estimates for the magnitude of Kloosterman sums. Later, J. Lehner \cite{lehner} and C. J. Mozzochi \cite{mozz} generalized the Rankin's result for an arbitrary Fuchsian group and for the congruence subgroup $\Gamma_{0}(N)$ respectively. In 1986 E. Gaigalas \cite{gai}, using Weil's estimate for the Kloosterman sum proved the following. ``For any $m\in \mathbb{N}$ there exist infinitely many $k\in 2\mathbb{N}$ for which the $m$-th Poincar\'e series of weight $k$ (with respect to any finite index subgroup of $SL_2(\mathbb{Z})$) is not identically zero." It is notable that the non-vanishing of Poincar\'e series is related to the famous conjecture of Lehmer \cite{lehmer}, which says that $\tau(n)\neq 0$, for all $n\geq 1,$ where $\tau$ is the Ramanujan $\tau$-function.

As we know that the space of Hilbert modular forms is a generalization of the space of elliptic modular forms, it is natural to ask the vanishing or non-vanishing of Hilbert Poincar\'e series. To the best of our knowledge, in the literature there is no result concerning this problem. In this article, we give an answer for this question in terms of weight as well as level aspects. More precisely, we prove that for a fixed level $\mathcal{I}$ and for any finite set $\mathcal{A}\subset \mathcal{O}_F^*$, there exist a positive constant $k_{\mathcal{A}}$ such that for all $k\geq k_{\mathcal{A}}$ and for any $\nu \in\mathcal{A}$
\begin{equation}
\mathcal{P}_{\vec{k},\nu,\mathcal{I}}(z)\nequiv 0.
\end{equation}
See section \ref{notation} for the notations.
We also prove an analogous result with respect to level. Note that in a particular case we get a generalization of Gaigalas's result for Hilbert Poincar\'e series. 

For the proof of our main results, we first prove (see \thmref{weight} and \thmref{level}) that the Fourier coefficients of Hilbert Poincar\'e series satisfy some nice orthogonality relations with respect to weight as well as level. These intermediate results generalize the work of E. Kowalski et. al. \cite{saha}. The main ingredients to prove these orthogonality relations are basic Fourier analysis for functions of several variables and dominated convergence theorem. These results may be of independent interest.
%Using this as a main tool we obtain our non-vanishing results.

The paper is organized as follows. Section \ref{notation} contains some notations and a brief introduction about Hilbert modular forms. Here we also state the results of Kowalski et. al. \cite{saha} for Poincar\'e series. Furthermore, their method has been extended to Hilbert Poincar\'e series in section \ref{intermediate}. The main results of the paper which is about the non-vanishing of Hilbert Poincar\'e series are given in section \ref{mainresult}.
\section{Notations and preliminaries}\label{notation}
Let $F$ be a totally real number field of degree $n$ over $\mathbb{Q}$ and $\mathcal{O}_{F}$ be its ring of algebraic integers. Assume that $\sigma_1, \sigma_2,...,\sigma_n$ denote the real embeddings of $F$. We write $\alpha_i= \sigma _i(\alpha)$ for $\alpha \in F ~\mbox{and}~ 1\leq i\leq n.$ The trace and norm of $\alpha \in F$ are defined by ${\rm tr}(\alpha)=\sum_{i=1}^n \alpha_i$ and $N(\alpha)=\prod_{i=1}^n \alpha_i$ respectively. For $\alpha \in \mathbb{C}^n$, the trace and norm are defined by the sum and the product of its components respectively. More generally, if $ c= (c_1,c_2,\cdots, c_n), d=(d_1,d_2,\cdots, d_n), z=(z_1,z_2,\cdots,z_n)~ \mbox{and}~m=(m_1,m_2,\cdots,m_n)\in \mathbb{C}^n,$ then the norm and trace are define by
  \begin{equation*}
 N(cz+d):= \prod_{i=1}^{n}(c_iz_i+d_i) ~~~~~~~~{\rm ~and~~~}~~~~~~~~~{\rm tr}(mz):= \sum _{i=1}^{n}m_iz_i.
\end{equation*}
For $\alpha \in \mathcal{O}_{F},$ we write $\alpha \succeq 0$ to demonstrate that either $\alpha=0 $ or $\alpha$ is totally positive (means all the conjugates of $\alpha$ are positive) and $\alpha\gg 0$ for $\alpha$ to be totally positive.

Let $GL^+_{2}(\mathbb{R})$ be the set of all matrices in $GL_{2}(\mathbb{R})$ with positive determinant. We know that the group $GL^+_{2}(\mathbb{R})$ acts on the upper half plane 
$\mathbb{H}=\{x+iy\in \mathbb C:y>0\}$ via,
$$\left(\begin{array}{cc}a&b\\c &d\end{array}\right)z=\frac{az+b}{cz+d}.$$
Now by using the above action, we define an action of $GL_{2}^+(\mathcal{O}_{F})$) on $\mathbb{H}^n$. Using all the embeddings of $F$ and fixing their order, we embed $GL_{2}(F)$ into $GL_{2}(\mathbb{R})^n.$ The image of $GL_{2}(\mathcal{O}_{F})$ in $GL_{2}(\mathbb{R})^n$ is discrete. We denote the set of elements in $GL_{2}(F)$ (respectively  $GL_{2}(\mathcal{O}_{F})$) with totally positive determinant by $GL^+_{2}(F)$ (respectively  $GL_{2}^+(\mathcal{O}_{F})$). For $g=(g_1,g_2,\cdots,g_n)\in GL_{2}^+(\mathbb{R})^n$ and $z=(z_1,z_2,\cdots,z_n)\in \mathbb{H}^n$, we define
$$ gz:=(g_1z_1,g_2z_2,\cdots,g_nz_n),$$
which deduces an action of $GL^+_{2}(F)$ and hence of $GL_{2}^+(\mathcal{O}_{F})$ on $\mathbb{H}^n.$

In this paper, we shall work with the Hilbert modular group
$$ \Gamma_{F}=SL_{2}(\mathcal{O}_{F}):= \bigg\{\left(\begin{array}{cc}a&b\\c &d\end{array}\right): a,b,c,d\in \mathcal{O} _{F},~ ad-bc=1 \bigg\}, $$
and its congruence subgroups of level $\mathcal{I}$, which is defined by
$$\Gamma_{0}(\mathcal{I}):= \bigg\{\left (\begin{array}{cc}\alpha&\beta\\ \gamma & \delta\end{array}\right) \in {\Gamma_{F}: \gamma \in \mathcal{I}}\bigg\},$$
where $\mathcal{I} \subseteq \mathcal{O} _{F}$ is a non-zero integral ideal.

Let $g=(g_1,g_2,\cdots,g_n)\in GL_{2}^+(\mathbb{R})^n$, $z=(z_1,z_2,\cdots,z_n)\in \mathbb{H}^n$, and $k=(k_1,k_2,\cdots,k_n)\in \mathbb{Z}^n$. We define
\begin{equation*}
\mu(g,z)^k:=\prod_{j=1}^{n}({\rm det}g_j)^{-k_j/2}(c_jz_j+d_j)^{k_j},
\end{equation*}
where $g_j=\left(\begin{array}{cc}*&*\\c_j &d_j\end{array}\right).$ Furthermore, for a function $f$ defined on $\mathbb{H}^n$, we set
$$(f|{_k}g)(z)=\mu(g,z)^{-k}f(gz).$$ 

\subsection{Hilbert modular forms}
A Hilbert modular form of weight $k \in \mathbb{N}^{n}_{0}$ for a congruence subgroup $ \Gamma$ of $\Gamma_{F}$ is a holomorphic function $f:\mathbb{H}^n \rightarrow \mathbb{C}$ such that 
$$ f\mid_k \gamma= f,~\mbox{for~ all}~\gamma \in \Gamma.$$
For $n=1$, we also need holomorphicity condition at the cusps of $\Gamma$. Note that for $n>1$, a Hilbert modular form is automatically holomorphic at the cusps by 
the Koecher principle \cite[Sec 1.4]{gar}. In addition, $f$ is called a Hilbert cusp form if it vanishes at all the cusps of $\Gamma.$ Let $M_{k}(\Gamma)$ denotes the space of Hilbert modular forms of  weight $k \in \mathbb{N}_{0}^{n}$ for the congruence subgroup $ \Gamma$ and $S_{k}(\Gamma)$ be the subspace of cusp forms. These are finite dimensional complex vector spaces and $S_{k}(\Gamma)$ is a Hilbert space with respect to the Petersson inner product
\begin{equation*}\label{inner}
\langle f,g\rangle_{} :=\int_{\Gamma\setminus \mathbb{H}^n} f(z)\overline{g(z)}y^k \frac{dxdy}{y^2},
\end{equation*} 
where $z=x+iy, ~dx=dx_{1}\cdots dx_{n}~\mbox{and}~dy=dy_{1} \cdots dy_{n}.$
If $f\in M_{k}(\Gamma_{0}(\mathcal{I}))$, where $\mathcal{I} \subseteq \mathcal{O} _{F}$ is a non-zero integral ideal, then we call $f$ to be a Hilbert modular form of weight $k$ and of level $\mathcal{I}$. Note that if $F=\mathbb{Q}$, then $M_{k}(\Gamma)$ is the space of elliptic modular forms.

By the Koecher principle, $f\in M_{k}(\Gamma)$ has a Fourier expansion at the cusp $\infty$ of the form
\begin{equation*}
f(z)= \sum \limits_{\substack{m\in\Lambda_{\Gamma}^*\\m\succeq 0}} a_{m}e^{2\pi i {\rm tr}(m z)}, 
\end{equation*}
where 
\begin{equation}\label{lambdaf}
\Lambda_{\Gamma}=\bigg\{\mu \in F:\left(\begin{array}{cc}1&\mu\\0 &1\end{array}\right)\in\Gamma \bigg\},~~
\Lambda_{\Gamma}^*=\big\{\mu\in F:{\rm tr}(\mu \Lambda_{\Gamma})\subseteq \mathbb{Z}\big\}.
\end{equation}
Here $\Lambda_{\Gamma}^*$ is called the dual space of $\Lambda_{\Gamma}$. Note that if $\Gamma=\Gamma_{0}(\mathcal{I})$ then $\Lambda_{\Gamma}=\mathcal{O}_{F}$ and $\Lambda_{\Gamma}^*=\mathcal{O}_{F}^*$.

Now we introduce some notations which will be used in later sections. For an integer $x \in\mathbb{N}_{0},$
 we denote $\vec{x}= (x,x, \cdots, x)\in \mathbb{N}_{0}^{n}.$ For $\nu= (\nu_{1},\nu_2,\cdots,\nu_{n}) \in \mathbb{N}_{0}^{n}~ \mbox{and}~ z=(z_1, z_2,\cdots, z_n)\in \mathbb{C}^n,$ we put
 $$ |\nu|= \sum_{i=1}^{n} \nu_{i},~\nu!= \prod_{i=1}^{n}\nu_{i}!~\mbox{and}~z^{\nu}=\prod_{i=1}^{n} z_{i}^{\nu_i}.$$
For an integral ideal $\mathcal{I}$ of $\mathcal{O}_F$, we denote the norm of  $\mathcal{I}$ by $\mathcal{N}(\mathcal{I}):=[\mathcal{O}_F:\mathcal{I}].$ For $z=(z_1, z_2,\cdots, z_n)\in \mathbb{H}^n,$ we always write ${\rm{Im}}(z)  =({\rm{Im}}(z_1), {\rm{Im}}(z_2),\cdots, {\rm{Im}}(z_n))$. Throughout the paper, we write Poincar\'e series for Poincar\'e series of the group $SL_2(\mathbb Z)$ and its congruence subgroups.
%Throughout the paper, $y$ stands for the $\rm Im(z)$ for $z\in\mathbb{H}^n.$
\subsection{Hilbert Poincar\'e Series:}
Let $\mathcal{I}\subseteq\mathcal{O}_{F}$ be a non-zero integral ideal and $\Gamma_{0}(\mathcal{I})$ be the associated congruence subgroup. For a totally positive element $\nu$ of $\mathcal{O}_{F}^*$ and weight $k=(k_1,k_2,\cdots,k_n)$ $(k_j>2, ~j=1,2,\cdots,n)$, we define the $\nu$-th Hilbert Poincar\'e series as follows:
\begin{equation}\label{pdefinition}
\mathcal{P}_{k,\nu,\mathcal{I}}(z)=\sum_{M\in\Gamma_{\infty}\setminus \Gamma_{0}(\mathcal{I})}\mu(M,z)^{-k}e^{2\pi i{\rm tr}(\nu(Mz))},
\end{equation}
where $\Gamma_{\infty}=\bigg\{\left(\begin{array}{cc}1& \mu \\0 &1\end{array}\right): \mu\in\mathcal{O} _{F}\bigg\}.$ It is well known that $\mathcal{P}_{k,\nu,\mathcal{I}} \in S_{k}(\Gamma_{0}(\mathcal{I}))$. 

For more details on the theory of Hilbert modular forms, we refer \cite{fre} and \cite{gar}.
\subsection{Orthogonality of Fourier coefficients of Poincar\'e series} Here we recall some results obtained by Kowalski et. al. \cite{saha}, concerning the orthogonality properties of the Fourier coefficients of Poincar\'e series. In the next section we generalize them for Hilbert Poincar\'e series. 

Let $k > 2$ and $m\geq 1$ be integers. Let $\mathcal{P}_{m,k,q}(z)$ be the $m$-th Poincar\'e series of weight $k$ for the group $\Gamma_{0}(q)$. Recently, Kowalski et. al. \cite{saha} proved that the Fourier coefficients $p_{m,k,q}(n)$ of the Poincar\'e series $\mathcal{P}_{m,k,q}(z)$ satisfy the following orthogonal relations with respect to weight $k$ as well as the level $q$.
\begin{prop}\label{k}
With notations as above, for fixed $m\geq 1$ and $n\geq 1$, we have
\begin{equation*}
\lim_{k\rightarrow\infty}p_{m,k,1}(n)=\delta(m,n),
\end{equation*}
where $\delta(\cdot, \cdot)$ is the Kronecker symbol.
\end{prop}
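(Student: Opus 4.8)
The plan is to start from the classical Fourier expansion of the Poincar\'e series and to show that everything beyond the diagonal term washes out as $k\to\infty$. Extracting the $n$-th coefficient by $p_{m,k,1}(n) = e^{2\pi n y}\int_{0}^{1}\mathcal{P}_{m,k,1}(x+iy)\,e^{-2\pi i n x}\,dx$ for a fixed $y>0$, I split the sum over $\Gamma_{\infty}\backslash SL_{2}(\mathbb{Z})$ according to whether the lower-left entry $c$ vanishes: the $c=0$ coset contributes exactly $e^{2\pi i m z}$, whose $n$-th coefficient is $\delta(m,n)$; for $c\ge 1$ one writes the coset representatives explicitly, uses $\gamma z=\tfrac{a}{c}-\tfrac{1}{c(cz+d)}$, and sums $d$ over residue classes. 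A Poisson summation (the ``basic Fourier analysis'' step) collapses the resulting $x$-integral into a Bessel function while the $d$-sum assembles into a Kloosterman sum, yielding the standard identity
\begin{equation}\label{petform}
p_{m,k,1}(n)=\delta(m,n)+2\pi\, i^{-k}\left(\frac{n}{m}\right)^{\frac{k-1}{2}}\sum_{c=1}^{\infty}\frac{S(m,n;c)}{c}\,J_{k-1}\!\left(\frac{4\pi\sqrt{mn}}{c}\right);
\end{equation}
alternatively one may simply quote \eqref{petform} from the literature. All the rearrangements above are legitimate because $k>2$ forces absolute convergence of the Poincar\'e series and of its Fourier expansion.

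It then remains to show that the series in \eqref{petform} tends to $0$. I would use the elementary uniform bound $|J_{\nu}(x)|\le (x/2)^{\nu}/\Gamma(\nu+1)$ (valid for $x\ge 0$, $\nu\ge 0$) together with the trivial Kloosterman estimate $|S(m,n;c)|\le c$; no cancellation in $c$ is needed. Folding the prefactor into the Bessel bound cancels all the growth,
\[
\left(\frac{n}{m}\right)^{\frac{k-1}{2}}\left|J_{k-1}\!\left(\frac{4\pi\sqrt{mn}}{c}\right)\right|\le \frac{(2\pi n/c)^{k-1}}{(k-1)!},
\]
so the modulus of the series is at most $\dfrac{2\pi(2\pi n)^{k-1}}{(k-1)!}\,\zeta(k-1)$, which, for $m$ and $n$ fixed, tends to $0$ super-exponentially as $k\to\infty$. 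Equivalently, one may phrase the passage to the limit via the dominated convergence theorem applied to the sum over $c$: each summand tends to $0$ as $k\to\infty$, and once $k$ exceeds some $k_{0}(m,n)$ the bound $(2\pi n/c)^{k-1}/(k-1)!$ is decreasing in $k$, hence dominated uniformly by the summable function $c\mapsto (2\pi n/c)^{k_{0}-1}/(k_{0}-1)!$.

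The only genuinely delicate point is the rigorous term-by-term passage to the limit — which is exactly what the dominated convergence theorem handles — and, if one derives \eqref{petform} rather than cites it, the Poisson-summation bookkeeping and the interchange of $\int_{0}^{1}$ with the coset sum; both rest on the absolute convergence ensured by $k>2$. In the level-$1$ classical setting all of this is routine, and the real content of the paper is to run the same scheme for Hilbert Poincar\'e series, where the Fourier analysis must be carried out over the lattice $\Lambda_{\Gamma}$ in $n$ real variables and the single Bessel factor is replaced by a product $\prod_{j}J_{k_{j}-1}$, so that the dominating function has to be constructed one coordinate at a time.
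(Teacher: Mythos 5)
Your proof is correct, but it takes exactly the route that the paper (and Kowalski--Saha--Tsimerman, from whom Proposition~\ref{k} is quoted) deliberately avoids. You invoke the Petersson formula expressing $p_{m,k,1}(n)$ as $\delta(m,n)$ plus a series of Kloosterman sums weighted by $J$-Bessel factors, and then kill the off-diagonal series with the elementary bounds $|J_{k-1}(x)|\le (x/2)^{k-1}/(k-1)!$ (valid by Poisson's integral representation for order $>-1/2$) and $|S(m,n;c)|\le c$; the prefactor $(n/m)^{(k-1)/2}$ is absorbed exactly as you say, giving $|p_{m,k,1}(n)-\delta(m,n)|\le 2\pi(2\pi n)^{k-1}\zeta(k-1)/(k-1)!\rightarrow 0$. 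This is sound, and it buys something the paper's argument does not: an explicit super-exponential rate of convergence, and (upgrading to Weil's bound) the possibility of letting $m,n$ grow with $k$, which is how Rankin's non-vanishing range is obtained. The paper's proof --- see the proof of Theorem~\ref{weight}, of which Proposition~\ref{k} is the case $F=\mathbb{Q}$ --- is softer: it extracts the coefficient by integrating the defining coset sum over a horocycle segment at height $y>1$, notes that the $c=0$ coset contributes exactly $e^{2\pi i m z}$ while every term with $c\neq 0$ is bounded by $y^{-k}\rightarrow 0$, and justifies the interchange of limit, sum and integral by dominated convergence against the $k$-independent majorant $\sum_{M}|\mu(M,z)|^{-3}$. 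No explicit formula, Bessel function or Kloosterman sum appears there; the payoff is that the identical argument transfers verbatim to Hilbert (and Siegel) Poincar\'e series, where the explicit formula is considerably messier --- which is precisely the point of the paper, as its concluding remark emphasizes. Your closing paragraph correctly identifies what would have to change in the Hilbert setting, but note that carrying the product of Bessel factors and generalized Kloosterman sums through that generalization is exactly the bookkeeping the paper's soft method is designed to sidestep.
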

Furthermore, they also showed similar result with respect to another important parameter, the level $q$.

\begin{prop}\label{q}
With notations as above, for fixed $k\geq 4, m$ and $n$, we have
\begin{equation*}
\lim_{q\rightarrow \infty}p_{m,k,q}(n)=\delta(m,n).
\end{equation*}
\end{prop}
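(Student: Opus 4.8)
The plan is to read off the Fourier coefficient $p_{m,k,q}(n)$ directly from the defining sum of $\mathcal{P}_{m,k,q}$, split off the contribution of the identity coset (which is precisely what produces $\delta(m,n)$), and then use the dominated convergence theorem to see that the remaining cosets contribute a quantity tending to $0$ as $q\to\infty$.

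First I would fix any height $y_0>0$ and use the unfolding
\begin{equation*}
p_{m,k,q}(n)=\int_{0}^{1}\mathcal{P}_{m,k,q}(x+iy_0)\,e^{-2\pi i n(x+iy_0)}\,dx,
\end{equation*}
which is legitimate because $\mathcal{P}_{m,k,q}\in S_{k}(\Gamma_{0}(q))$ has the stated Fourier expansion, absolutely convergent for $k\ge 4>2$, so in particular the integral does not depend on $y_0$. Writing a representative of $\gamma\in\Gamma_\infty\backslash\Gamma_0(q)$ with lower-left entry $c$, I would split the coset sum into the trivial coset ($c=0$) and the rest. The trivial coset contributes only $e^{2\pi imz}$, and integrating it against $e^{-2\pi inz}$ over $x\in[0,1]$ gives exactly $\delta(m,n)$. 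Hence it remains to prove that
\begin{equation*}
E_{q}:=\int_{0}^{1}R_{q}(x+iy_0)\,e^{-2\pi i n(x+iy_0)}\,dx\longrightarrow 0,\qquad
R_{q}(z):=\sum_{\substack{\gamma\in\Gamma_{\infty}\backslash\Gamma_{0}(q)\\ c\neq 0}}(cz+d)^{-k}e^{2\pi i m\gamma z}.
\end{equation*}

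Second, I would apply dominated convergence on $[0,1]$. For the pointwise limit: a matrix with $c\neq 0$ lies in $\Gamma_0(q)$ only when $q\mid c$, hence only when $|c|\ge q$; since $|cz+d|^{-k}\le(|c|\,y_0)^{-k}$ and $|e^{2\pi im\gamma z}|\le 1$, the sum $R_q(x+iy_0)$ ranges only over $|c|\ge q$, so it is dominated by the tail of a convergent series and tends to $0$ as $q\to\infty$ for each fixed $x$. For the domination I would use $\Gamma_0(q)\subseteq\Gamma_0(1)$ to bound $|R_q(x+iy_0)|$, uniformly in $q$, by the Eisenstein-type majorant $g(x):=\sum_{\gamma\in\Gamma_\infty\backslash\Gamma_0(1),\,c\neq 0}|c(x+iy_0)+d|^{-k}$, which converges for $k>2$ and is continuous, hence bounded, on the compact interval $[0,1]$. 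The dominated convergence theorem then gives $E_q\to 0$, and together with the identity-coset term this yields $p_{m,k,q}(n)\to\delta(m,n)$.

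The step I expect to be the (mild) crux is securing both hypotheses of the dominated convergence argument at once: that $g$ is finite and bounded on $[0,1]$, which rests on $k>2$ via the standard bound $\sum_{(c,d)\neq(0,0)}|cz+d|^{-k}<\infty$, and that the Fourier integral is height-independent, which rests on absolute convergence of the series. A more computational alternative would substitute the classical formula
\begin{equation*}
p_{m,k,q}(n)=\delta(m,n)+2\pi i^{-k}\Big(\tfrac{n}{m}\Big)^{(k-1)/2}\sum_{\substack{c>0\\ q\mid c}}\frac{S(m,n;c)}{c}\,J_{k-1}\!\Big(\frac{4\pi\sqrt{mn}}{c}\Big),
\end{equation*}
and then estimate the sum using $|S(m,n;c)|\le c$ and $J_{k-1}(t)\ll_k t^{k-1}$ on a bounded range, giving an error $O_{k,m,n}(q^{1-k})\to 0$. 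I would nonetheless prefer the dominated convergence route, since it is the one that carries over transparently to the several-variable Hilbert setting of the next section.
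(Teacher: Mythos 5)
Your proposal is correct and follows essentially the same route as the paper: isolate the identity coset (which yields $\delta(m,n)$), observe that every remaining term has $q\mid c$ and hence $|c|\geq q$, and justify the interchange of limit and integration by dominating the tail with the level-one majorant $\sum_{c\neq 0}|cz+d|^{-k}$, convergent for $k>2$ on a compact horizontal segment. This is exactly the ``soft'' dominated-convergence argument of Kowalski--Saha--Tsimerman that the paper cites for Proposition~\ref{q} and reproduces in the Hilbert setting in Theorem~\ref{level}.
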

Note that the above orthogonality relations can be obtained by applying the trace formula to Poincar\'e series. But in \cite{saha}, the authors gave a completely different and a soft proof. Furthermore, in the same paper they have also obtained a similar orthogonality results for the coefficients of Siegel Poincar\'e series

\section{Intermediate results: Orthogonality relations}\label{intermediate}
In this section, we extend \propref{k} and \propref{q} for Hilbert Poincar\'e series, which will be used to get our main results. We remark that these results may be of independent interest.

Let $p_{k,\nu,\mathcal{I}}(\mu)$ be the $\mu$-th Fourier coefficient of the Hilbert Poincar\'e series $\mathcal{P}_{k,\nu,\mathcal{I}}$ defined by \eqref{pdefinition}, where $\mu\in\mathcal{O}_F^*.$ First, we prove that the Fourier coefficients $p_{k,\nu,\mathcal{I}}$ satisfy an orthogonality relation for sufficiently large $k.$ The main ingredient for the proof is dominated convergence theorem. In \thmref{weight}, we shall only work with parallel weight $\vec{k}=(k,k,...,k).$
\begin{thm}{\bf [Weight aspect]}\label{weight}
Let $\mathcal{P}_{\vec{k},\nu,\mathcal{I}}$ be the $\nu$-${\rm th}$ Hilbert Poincar\'e series of weight $\vec{k}$ and of level $\mathcal{I}$. Then for fixed $\nu\gg 0$ and  $\mu\gg 0$, we have 
$$\lim_{k\rightarrow \infty}p_{\vec{k},\nu,\mathcal{I}}(\mu)= \delta(\nu, \mu).$$
\end{thm}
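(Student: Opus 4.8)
The plan is to adapt the ``soft'' argument of Kowalski et al.\ to the Hilbert setting, the key being an explicit integral representation of the Fourier coefficients $p_{\vec k,\nu,\mathcal I}(\mu)$ that allows passage to the limit $k\to\infty$ under the integral sign. First I would record the well-known explicit formula for the Fourier coefficients of a Poincar\'e series obtained by unfolding: writing $\mathcal P_{\vec k,\nu,\mathcal I}(z)=\sum_{M\in\Gamma_\infty\backslash\Gamma_0(\mathcal I)}\mu(M,z)^{-\vec k}e^{2\pi i\,{\rm tr}(\nu(Mz))}$ and splitting the sum according to the lower-left entry $c$ of $M$, the $c=0$ term contributes the diagonal $\delta(\nu,\mu)$ exactly, and the $c\neq 0$ terms contribute a sum over $c$ and over residues, each term being (up to normalising constants and a power of $c$) a product over the $n$ archimedean places of a one-variable integral $\int_{\mathbb R}(x_j+i)^{-k}e^{-2\pi i(\text{stuff})x_j}\,dx_j$, i.e.\ an honest multi-dimensional Bessel/$J$-function factor times a Kloosterman-type sum. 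So the task reduces to showing that everything with $c\neq 0$ tends to $0$ as $k\to\infty$.

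Next I would make the decay in $k$ visible. The cleanest route is \emph{not} to cite asymptotics of Bessel functions but to estimate the defining integral directly: for each place $j$ one has $|(x_j+i)^{-k}|=(1+x_j^2)^{-k/2}\leq (1+x_j^2)^{-2}\cdot(1+x_j^2)^{-(k-4)/2}$, and $(1+x_j^2)^{-(k-4)/2}\to 0$ pointwise for $x_j\neq 0$ while being dominated by the integrable function $(1+x_j^2)^{-2}$; an even simpler device, used in \cite{saha}, is to pull out a fixed factor and bound $(1+x_j^2)^{-k/2}\leq 2^{-k/2}(1+x_j^2/?)\dots$ — in any case the product over $n$ places gives a bound of the shape $C(F,\mathcal I,\nu,\mu)\, r^{k}$ for some $r<1$ (after separating finitely many ``small $c$'' terms that go to $0$ individually), uniformly in the residue sums which are bounded by $\mathcal N(c)^{n}$ or so. The arithmetic sum over $c\in\mathcal I$ with a fixed power of $|N(c)|$ in the denominator converges for $k$ large by comparison with the Dedekind zeta function of $F$, and the whole tail is then dominated by a convergent $k$-independent series each of whose terms $\to 0$; dominated convergence (for the counting measure on the set of $c$'s and residues) finishes it, leaving only the $c=0$ term, namely $\delta(\nu,\mu)$.

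There is a small bookkeeping subtlety I would handle carefully: the Fourier expansion is indexed by the dual lattice $\mathcal O_F^\ast$ (the inverse different), and the unfolding produces the correct characters ${\rm tr}(\mu z)$ with $\mu\in\mathcal O_F^\ast$ only after one is careful about the change of variables $x\mapsto x+\operatorname{(integer\ translate)}$ coming from $\Gamma_\infty=\{(\begin{smallmatrix}1&*\\0&1\end{smallmatrix})\}$ with $\ast\in\mathcal O_F$; this is standard but worth stating, and it is also where the normalising constant (a power of $2\pi$, the norm of the different, and possibly $\sqrt{\mathcal N(\mathcal I)}$) enters. I would also note that the argument is uniform enough that $\nu$ and $\mu$ can be allowed to vary in a fixed finite set, which is exactly what is needed for the non-vanishing corollary in Section~\ref{mainresult}, though for the theorem as stated fixed $\nu,\mu$ suffice.

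The main obstacle I anticipate is purely technical rather than conceptual: producing the explicit Fourier coefficient formula in the Hilbert setting with \emph{all} constants, lattices, and convergence ranges correct, and then packaging the place-by-place integral estimates into a single dominating series that is manifestly $k$-independent and summable — i.e.\ verifying the hypotheses of the dominated convergence theorem cleanly. The analytic heart (pointwise decay of $(1+x^2)^{-k/2}$ and convergence of $\sum_{c}|N(c)|^{-s}$) is elementary; the work is in the careful set-up. Once that is in place, the proof of \thmref{weight} is a few lines.
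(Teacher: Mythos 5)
Your proposal is correct in outline, but it takes a genuinely different route from the paper --- essentially the ``explicit formula'' route that the paper (following \cite{saha}) deliberately avoids. You first unfold to get the Petersson-type expansion of $p_{\vec k,\nu,\mathcal I}(\mu)$ (diagonal term plus a sum over lower-left entries $\gamma\neq 0$ of Kloosterman-type sums times archimedean integrals), and only then pass to the limit. The paper never unfolds: it writes $p_{\vec k,\nu,\mathcal I}(\mu)={\rm vol}(\Omega)^{-1}\int_{\Omega}\mathcal P_{\vec k,\nu,\mathcal I}(z)e^{-2\pi i\,{\rm tr}(\mu z)}\,dz$ over the \emph{compact} torus $\Omega=\{x+iy:x\in\mathcal O_F\backslash\mathbb R^n\}$ at a height chosen so that $N(y)>1$, and then applies dominated convergence directly to this integral. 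The single elementary inequality $|\mu(M,z)|\geq |N(\gamma)|\,N(y)\geq N(y)>1$ for $\gamma\neq 0$ does all the work there: it shows each non-identity term of the series is at most $N(y)^{-k}\to 0$ (giving the pointwise limit $\mathcal P_{\vec k,\nu,\mathcal I}(z)\to e^{2\pi i\,{\rm tr}(\nu z)}$ on $\Omega$), and the $k$-independent majorant is simply $G(z)=\sum_M|\mu(M,z)|^{-3}$, integrable on $\Omega$ by compactness and \cite[Lemma 5.7]{fre}. The choice $N(y)>1$ is precisely the trick that makes unfolding, Kloosterman sums, Bessel integrals and the dual-lattice/different bookkeeping unnecessary. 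Your route is viable, but the ``bookkeeping'' you defer is where the real difficulty sits in the Hilbert setting: the set of lower-left entries $\gamma$ must be organized modulo the (infinite, for $n>1$) unit group before a ``comparison with the Dedekind zeta function'' makes sense, and your pointwise-a.e.\ decay of $(1+x_j^2)^{-(k-4)/2}$ is a more delicate limiting argument than the geometric decay $N(y)^{-k}$ the paper gets for free. What your approach would buy, if carried out completely, is quantitative control --- an effective $k_0$ and rates in $\mathcal N(\gamma)$ --- which the paper's soft argument does not provide; what the paper's approach buys is a proof of \thmref{weight} in under two pages with no arithmetic input whatsoever.
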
		
	
\begin{proof}
From the theory of Fourier analysis for function of several variables, we have 
\begin{equation*}
 p_{\vec{k},\nu,\mathcal{I}}(\mu)={\rm{vol}}(\Omega)^{-1}\int_{\Omega}\mathcal{P}_{\vec{k},\nu,\mathcal{I}}(z)e^{-2\pi i{\rm tr}(\mu z)}dz,
\end{equation*}
where for an arbitrary $y\in\mathbb{R}_+^n$, $\Omega=\{z=x+iy: x\in\mathcal{O}_F\setminus\mathbb{R}^n\}$. For our purpose, we choose $y$ with $ N(y)>1$. Now taking the limit as ${k\rightarrow \infty}$ on both sides of the above equation, we obtain
\begin{equation}\label{coe}
 \lim_{k\rightarrow \infty}p_{\vec{k},\nu,\mathcal{I}}(\mu)={\rm{vol}}(\Omega)^{-1}\lim_{k\rightarrow \infty}\int_{\Omega}\mathcal{P}_{\vec{k},\nu,\mathcal{I}}(z)e^{-2\pi i{\rm tr}(\mu z)}dz.
\end{equation}
We want to apply the dominated convergence theorem for interchanging the limit and integration on the right hand side of \eqref{coe}. Now we prove the following two assertions, which enable us to apply the dominated convergence theorem  for the sequence $\{\mathcal{P}_{\vec{k},\nu,\mathcal{I}}\}_{k\geq 1}$.
\begin{enumerate}
\item[{\bf (a)}]
For any $z\in\Omega$, as $k\rightarrow\infty$ we have
\begin{equation}\label{convergence}
\mathcal{P}_{\vec{k},\nu,\mathcal{I}}(z)\rightarrow e{^{2 \pi i{\rm tr}(\nu z)}}.
\end{equation}
 \item[{\bf (b)}]
For all $k >2$ and $z\in \Omega$, there exist an integrable function $G$ on $\Omega$ such that 
\begin{equation*}
|\mathcal{P}_{\vec{k},\nu,\mathcal{I}}(z)|\leq G(z).
\end{equation*}
\end{enumerate}

\noindent
To prove the first assertion, we show that as  $k\rightarrow\infty$  exactly one of the term in the series expansion of $\mathcal{P}_{\vec{k},\nu,\mathcal{I}}$ (defined by \eqref{pdefinition}) converges to $e{^{2 \pi i{\rm tr}(\nu z)}}$ and others (individually) tend to $0$. Any term in the series expansion looks like
\begin{equation*}\label{muterm}
\mu(M,z)^{-\vec{k}}e^{2\pi i{\rm tr}(\nu(Mz))},
\end{equation*}
for some $M=\left(\begin{array}{cc}*& * \\ \gamma & \delta\end{array}\right)\in\Gamma_{\infty}\setminus \Gamma_{0}(\mathcal{I})$. Note that $\gamma = 0$ only for one such $M$ and so we can choose $M=I_{\rm id}$ to be a representative. Therefore, in this case
\begin{equation}\label{muequal}
\mu(M,z)^{-\vec{k}}e^{2\pi i{\rm tr}(\nu(Mz))}=e^{2\pi i{\rm tr}(\nu z)},
\end{equation}
for all weight $\vec{k}$ and $z\in \mathbb{H}^n.$ Now suppose $\gamma\neq 0$. For $z=(z_1,z_2,\cdots,z_n)\in \mathbb{H}^n$ it is easy to see that  ${\rm{Im}}({\rm tr}(\nu(Mz)))>0$, which gives
\begin{equation*}\label{mu1}
 |\mu(M,z)^{-\vec{k}}e^{2\pi i{\rm tr}(\nu(Mz))}|\leq |\mu(M,z)^{-\vec{k}}|.
\end{equation*}
Since $0 \neq \gamma \in \mathcal{O}_{F}$, i.e., $N(\gamma)\in \mathbb{Z},$ and hence $N(\gamma)^2\geq 1$. Now
\begin{equation*}\label{mu2}
|\mu(M,z)|^{2}=\prod_{j=1}^{n}|\gamma _jz_j + \delta_j|^2\\
	                                  \geq\prod_{j=1}^{n}(\gamma _j {y}_j)^2\\
                                =N(\gamma)^2N(y)^2\geq N(y)^2,
\end{equation*}
where $y_j={\rm{Im}}(z_j)$ for $1\leq j\leq n.$
Combining the last two inequality, we get
\begin{equation*}\label{muinequality}
|\mu(M,z)^{-\vec{k}}e^{2\pi i{\rm tr}(\nu(Mz))}|\leq N(y)^{-k}.
\end{equation*}
We know that for $z\in\Omega$, $N(y)>1$. Therefore, for any $z\in\Omega$ the above inequality yields 
 \begin{equation}\label{muconvergence}
 |\mu(M,z)^{-\vec{k}}e^{2\pi i{\rm tr}(\nu(Mz))}|\rightarrow 0~~\mbox{as}~~k\rightarrow \infty.
 \end{equation}
 Thus the claim \eqref{convergence} follows from \eqref{muequal} and \eqref{muconvergence}.
 
For the proof of the assertion {\bf (b)}, we define a function $G(z)$ on $\mathbb{H}^n$ by
 \begin{equation*}
G(z)=\sum_{M\in\Gamma_{\infty}\setminus \Gamma_{0}(\mathcal{I})} |\mu(M,z)|^{-3}.
\end{equation*}
It is a well-known (see \cite[lemma 5.7]{fre}) fact that the above series is an absolutely convergent and convergences uniformly in every compact subset of $ \mathbb{H}^n$.
% hence $G(z)$ is a holomorpic function on $\mathbb{H}^n$. 
Since the domain $\Omega$ is compact in $\mathbb{H}^n$, therefore $G(z)$ is integrable on $\Omega$, i.e.,
\begin{equation*}
\int_{\Omega}G(z)dz <\infty .
\end{equation*}
Also for any $z\in\Omega$, we obtain
\begin{equation*}
|\mathcal{P}_{\vec{k},\nu,\mathcal{I}}(z)|\leq\sum_{M\in\Gamma_{\infty}\setminus \Gamma_{0}(\mathcal{I})}|\mu(M,z)^{-\vec{k}}e^{2\pi i{\rm tr}(\nu(Mz))}|\leq\sum_{M\in\Gamma_{\infty}\setminus \Gamma_{0}(\mathcal{I})}|\mu(M,z)|^{-3}.
\end{equation*}
In other words, for every positive integer $k>2$ and  $z\in \Omega$, we get 
\begin{equation*}\label{bounded}
|\mathcal{P}_{\vec{k},\nu,\mathcal{I}}(z)|\leq G(z),
\end{equation*}
which proves the required result.

Now the dominated convergence theorem allows us to interchange limit and integration in equation \eqref{coe}.
Therefore using \eqref{convergence}, we have
\begin{equation*}
\lim_{k\rightarrow \infty}p_{\vec{k},\nu,\mathcal{I}}(\mu)={\rm vol}(\Omega)^{-1} \int_{\Omega}e^{2\pi i{\rm tr}(\nu z)}e^{-2\pi i{\rm tr}(\mu z)}dz=\delta(\nu,\mu).
\end{equation*} 
This completes the proof.
\end{proof}	   

Next, we prove that the Fourier coefficients $p_{k,\nu,\mathcal{I}}$ also satisfy a similar orthogonality relation with respect to other important parameter, the level $\mathcal{I}$. The proof, as before, uses the dominated convergence theorem for the sequence $\{\mathcal{P}_{k,\nu,\mathcal{I}}(z)\}_{\mathcal{N}(\mathcal{I})\geq 1}.$ Here we work with any weight $k\in\mathbb{N}_{0}^{n}$, not necessarily parallel weight.

\begin{thm}{\bf [Level aspect]}\label{level}
Let $\mathcal{P}_{k,\nu,\mathcal{I}}$ be the $\nu$-${\rm th}$ Hilbert Poincar\'e series of weight $k\in\mathbb{N}_{0}^{n}$ of level $\mathcal{I}$. Then for fixed $k>2,\nu\gg0$ and $\mu\gg 0,$ we have
    $$\lim_{\mathcal{N}(\mathcal{I})\rightarrow \infty}p_{k,\nu,\mathcal{I}}(\mu)= \delta(\nu, \mu).$$
\end{thm}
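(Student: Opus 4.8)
The proof will follow the same template as that of \thmref{weight}, with the weight limit replaced by the level limit and with two applications of the dominated convergence theorem. Fix $y\in\mathbb{R}_+^n$ with $N(y)>1$ and put $\Omega=\{z=x+iy:x\in\mathcal{O}_F\setminus\mathbb{R}^n\}$. Since $\Lambda_{\Gamma_0(\mathcal{I})}=\mathcal{O}_F$ for \emph{every} non-zero ideal $\mathcal{I}$, both $\Omega$ and $\mathrm{vol}(\Omega)$ are independent of $\mathcal{I}$, and Fourier inversion gives
\begin{equation*}
p_{k,\nu,\mathcal{I}}(\mu)=\mathrm{vol}(\Omega)^{-1}\int_{\Omega}\mathcal{P}_{k,\nu,\mathcal{I}}(z)\,e^{-2\pi i\,\mathrm{tr}(\mu z)}\,dz.
\end{equation*}
Thus it suffices to show that one may interchange $\lim_{\mathcal{N}(\mathcal{I})\to\infty}$ with $\int_\Omega$ and that $\mathcal{P}_{k,\nu,\mathcal{I}}(z)\to e^{2\pi i\,\mathrm{tr}(\nu z)}$ pointwise on $\Omega$; indeed then the right-hand side tends to $\mathrm{vol}(\Omega)^{-1}\int_\Omega e^{2\pi i\,\mathrm{tr}(\nu z)}e^{-2\pi i\,\mathrm{tr}(\mu z)}\,dz=\delta(\nu,\mu)$.

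The key is to rewrite the Poincar\'e series as a sum over the \emph{fixed} group $\Gamma_F=SL_2(\mathcal{O}_F)$. Since $\Gamma_\infty\subseteq\Gamma_0(\mathcal{I})\subseteq\Gamma_F$ and the lower-left entry $\gamma(M)$ is unchanged under left multiplication by $\Gamma_\infty$, the natural map $\Gamma_\infty\setminus\Gamma_0(\mathcal{I})\hookrightarrow\Gamma_\infty\setminus\Gamma_F$ is injective with image $\{[M]:\gamma(M)\in\mathcal{I}\}$, so for all $z\in\mathbb{H}^n$,
\begin{equation*}
\mathcal{P}_{k,\nu,\mathcal{I}}(z)=\sum_{M\in\Gamma_\infty\setminus\Gamma_F}b_M(z)\,\mathbf{1}[\gamma(M)\in\mathcal{I}],\qquad b_M(z):=\mu(M,z)^{-k}e^{2\pi i\,\mathrm{tr}(\nu(Mz))}.
\end{equation*}
Exactly as in \thmref{weight}, $\nu\gg0$ forces $\mathrm{Im}(\mathrm{tr}(\nu(Mz)))>0$, whence $|b_M(z)|\leq|\mu(M,z)^{-k}|=\prod_{j=1}^n|\gamma_jz_j+\delta_j|^{-k_j}$; and it is well known (see \cite{fre}) that $G(z):=\sum_{M\in\Gamma_\infty\setminus\Gamma_F}\prod_{j=1}^n|\gamma_jz_j+\delta_j|^{-k_j}$ converges absolutely and locally uniformly on $\mathbb{H}^n$ when every $k_j>2$. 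Hence $\sum_M|b_M(z)|\,\mathbf{1}[\gamma(M)\in\mathcal{I}]\leq G(z)$, with $G$ independent of $\mathcal{I}$, continuous, and therefore integrable on the compact set $\Omega$.

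Now comes the one genuinely new ingredient, the pointwise behaviour of the indicators as $\mathcal{N}(\mathcal{I})\to\infty$. For the (unique, by the normalisation of $\Gamma_\infty$) coset with $\gamma(M)=0$ we have $\mathbf{1}[\gamma(M)\in\mathcal{I}]=1$ for all $\mathcal{I}$. If $\gamma(M)\neq0$, then $\gamma(M)\in\mathcal{I}$ forces $(\gamma(M))\subseteq\mathcal{I}$, hence $\mathcal{N}(\mathcal{I})$ divides $|N(\gamma(M))|$ and in particular $\mathcal{N}(\mathcal{I})\leq|N(\gamma(M))|$; equivalently, only the finitely many divisor ideals of $(\gamma(M))$ can contain $\gamma(M)$. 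Therefore $\mathbf{1}[\gamma(M)\in\mathcal{I}]\to\mathbf{1}[\gamma(M)=0]$ as $\mathcal{N}(\mathcal{I})\to\infty$, for each $M$. Applying the dominated convergence theorem on the discrete set $\Gamma_\infty\setminus\Gamma_F$ (counting measure, summable dominating function $G(z)$) yields $\lim_{\mathcal{N}(\mathcal{I})\to\infty}\mathcal{P}_{k,\nu,\mathcal{I}}(z)=\sum_{M:\,\gamma(M)=0}b_M(z)=e^{2\pi i\,\mathrm{tr}(\nu z)}$ for $z\in\Omega$ (using \eqref{muequal} for the identity term). A second application of the dominated convergence theorem, now for the integral over $\Omega$ with the $\mathcal{I}$-free dominating function $G$, justifies interchanging the limit with $\int_\Omega$, and the theorem follows.

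I expect the main obstacle to be precisely the reduction carried out in the second paragraph: re-expressing the $\mathcal{I}$-dependent coset space as the fixed space $\Gamma_\infty\setminus\Gamma_F$ cut out by the divisibility condition $\gamma(M)\in\mathcal{I}$, which is what converts the level limit into a pointwise statement to which dominated convergence applies. Once this is in place, the arithmetic input ``$0\neq\gamma\in\mathcal{I}\Rightarrow|N(\gamma)|\geq\mathcal{N}(\mathcal{I})$'' and the two domination arguments are routine; one should, however, be careful (as in \thmref{weight}) that the chosen normalisation of $\Gamma_\infty$ really makes the coset with $\gamma=0$ unique.
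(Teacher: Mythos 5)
Your proposal is correct and follows essentially the same route as the paper: the same Fourier-inversion integral, the same re-indexing of the series over $\Gamma_\infty\setminus\Gamma_F$ with the indicator of $\gamma\in\mathcal{I}$, the same dominating function $G_k(z)=\sum_M|\mu(M,z)|^{-k}$, the same observation that $0\neq\gamma\in\mathcal{I}$ forces $\mathcal{N}(\mathcal{I})\leq|N(\gamma)|$, and the dominated convergence theorem to pass to the limit. Your only (welcome) refinement is to invoke dominated convergence explicitly a second time for the term-by-term limit of the infinite sum over cosets, a step the paper carries out more informally.
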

\begin{proof}
As before, we start with the integral formula
\begin{equation*}
 p_{k,\nu,\mathcal{I}}(\mu)={\rm vol}(\Omega)^{-1}\int_{\Omega}\mathcal{P}_{k,\nu,\mathcal{I}}(z)e^{-2\pi i{\rm tr}(\mu z)}dz.
\end{equation*}
Taking limit $\mathcal{N}(\mathcal{I})$ tends to infinity on the both sides of the above equation, we get
\begin{equation}\label{coe1}
\lim_{\mathcal{N}(\mathcal{I})\rightarrow \infty} p_{k,\nu,\mathcal{I}}(\mu)={\rm vol}(\Omega)^{-1}\lim_{\mathcal{N}(\mathcal{I})\rightarrow \infty}\int_{\Omega}\mathcal{P}_{k,\nu,\mathcal{I}}(z)e^{-2\pi i{\rm tr}(\mu z)}dz.
\end{equation}
For simplifying the right hand side of the above equation, we need to interchange the limit and integration. For that we use the dominated convergence theorem. To apply this theorem, we prove the following two assertions. First, every element in the sequence $\{\mathcal{P}_{k,\nu,\mathcal{I}}(z)\}_{\mathcal{N}(\mathcal{I})\geq 1}$ is bounded by some integrable function on $\Omega$.  Second, 
 \begin{equation}\label{convergence1}
\mathcal{P}_{k,\nu,\mathcal{I}}(z)\rightarrow e{^{2 \pi i{\rm tr}(\nu z)}},~~~\mbox{as}~~~ \mathcal{N}(\mathcal{I})\rightarrow\infty,\forall~~ z\in\Omega.
\end{equation}

For $k=(k_1,k_2,\cdots,k_n)\in \mathbb{N}^n$ with $k_j>2$ for all $1\leq j\leq n$, we define 
$$G_{k}(z)=\sum_{M\in\Gamma_{\infty}\setminus \Gamma_F}|\mu(M,z)|^{-k}.$$
It is a well-known fact (see \cite[lemma 5.7]{fre}) that the above series is an absolutely convergent and convergences uniformly in every compact subset of $ \mathbb{H}^n$.
% hence $G_{k}(z)$ is a holomorpic function on $\mathbb{H}^n$. 
Since the domain $\Omega$ is compact in $\mathbb{H}^n$, therefore $G_{k}(z)$ is integrable on $\Omega$, i.e.,
\begin{equation*}
\int_{\Omega}G_{k}(z)dz <\infty .
\end{equation*}
Hence for a fixed $k\in \mathbb{N}^n$ and following the arguments as in the proof of the previous theorem, we have
\begin{equation}\label{bounded1}
|\mathcal{P}_{k,\nu,\mathcal{I}}(z)|\leq G_{k}(z),
\end{equation}
for every integral ideal $\mathcal{I}$ of $\mathcal{O}_{F}$ and for all $z\in \mathbb{H}^n.$ This proves the first assertion.

For the proof of the claim \eqref{convergence1}, we proceed as follows.
Note that $\Gamma_{\infty}\setminus\Gamma_{0}(\mathcal{I})$ is a subset of $\Gamma_{\infty}\setminus\Gamma_{F}$, therefore we write \eqref{pdefinition} as 
\begin{equation}\label{pdefinition1}
\mathcal{P}_{k,\nu,\mathcal{I}}(z)=\sum_{M\in\Gamma_{\infty}\setminus\Gamma_{F}}\Delta_{I}(M) \mu(M,z)^{-k}e^{2\pi i{\rm tr}(\nu(Mz))},
\end{equation}
where
\[\Delta_{I}\bigg(\left(\begin{array}{cc}\alpha&\beta\\ \gamma &\delta\end{array}\right)\bigg)=
   \begin{cases}
    1  & ~\text{if}~  \gamma \in \mathcal{I},\\
    0  &~ \text{otherwise}.\\
   \end{cases} 
     \]
For $M=\left(\begin{array}{cc}*& * \\ \gamma & \delta\end{array}\right)\in\Gamma_{\infty}\setminus \Gamma_{F}$, consider a general term 
\begin{equation}\label{deltaterm}
\Delta_{I}(M) \mu(M,z)^{-k}e^{2\pi i{\rm tr}(\nu(Mz))}
\end{equation}
 in the Poincar\'e series $\mathcal{P}_{k,\nu,\mathcal{I}}(z)$ defined by \eqref{pdefinition1}.
 If $\gamma=0$ then by definition, $$\Delta_{I}(M)=1$$ for all non-zero integral ideal $\mathcal{I}.$ Also $M \in\Gamma_{\infty}$, hence we can take $M$ to be the identity matrix $I_{\rm id}$. Therefore, we have
 \begin{equation}\label{deltaequal}
\Delta_{I}(M) \mu(M,z)^{-k}e^{2\pi i{\rm tr}(\nu(Mz))}=e^{2\pi i{\rm tr}(\nu z)},
\end{equation}
for every non-zero integral ideal $\mathcal{I}.$ Next, if $\gamma\neq 0$, then
\begin{equation*}
\Delta_{I}(M)=0,
\end{equation*}
for all $\mathcal{I}$ with $\mathcal{N}(I)>\mathcal{N}(\gamma\mathcal{O}_F)$.
Hence
\begin{equation}\label{convergence0}
\lim _{\mathcal{N}(I)\rightarrow \infty}\Delta_{I}(M)\mu(M,z)^{-k} e^{2\pi i {\rm tr}(\nu(Mz))}=0.
\end{equation}
The assertion \eqref{convergence1} follows if we take limit $\mathcal{N}(\mathcal{I})$ tends to $\infty$ in the definition
\eqref{pdefinition1} and using \eqref{deltaequal} and \eqref{convergence0}.

Now interchanging the limit and integration in \eqref{coe1} and further using \eqref{convergence1} gives the required result.
\end{proof}

\section{non-vanishing of Hilbert Poincar\'e series}\label{mainresult}
The main goal of the paper is to prove some non-vanishing results for Hilbert Poincar\'e series, which we prove here. Using \thmref{weight} and \thmref{level}, we prove that the Hilbert Poincar\'e series does not vanish identically for sufficiently large weight as well as sufficiently large level respectively.

\begin{thm}\label{nonvanishingw}
Let $\nu\in\mathcal{O}_F^*$, where $\mathcal{O}_F^*$ is the dual space defined by \eqref{lambdaf}. Let $\mathcal{P}_{\vec{k},\nu,\mathcal{I}}$ be the $\nu$-${\rm {th}}$ Hilbert Poincar\'e series of (parallel) weight $\vec{k}$ and level $\mathcal{I}$. Then for fixed $\nu$ and level $\mathcal{I}$, there exist a positive constant $k_0$ such that for all $k>k_0,$ we have
\begin{equation*}
\mathcal{P}_{\vec{k},\nu,\mathcal{I}}\nequiv 0.
\end{equation*}
\end{thm}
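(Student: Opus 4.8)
The plan is to deduce non-vanishing immediately from the weight-aspect orthogonality relation already established in \thmref{weight}, using the elementary fact that a Hilbert cusp form is identically zero precisely when all of its Fourier coefficients vanish. Thus it is enough to produce, for every sufficiently large $k$, a single Fourier coefficient of $\mathcal{P}_{\vec{k},\nu,\mathcal{I}}$ that is non-zero.

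The natural choice is the $\nu$-th coefficient itself. First I would note that $\nu \gg 0$ is part of the standing hypotheses on the Hilbert Poincar\'e series, so $\nu$ is an admissible Fourier index (it lies in $\mathcal{O}_F^*$ and satisfies $\nu \succeq 0$) and, moreover, the pair $(\nu,\mu)=(\nu,\nu)$ meets the hypotheses of \thmref{weight}. Applying that theorem with $\mu=\nu$ gives
$$\lim_{k\rightarrow\infty} p_{\vec{k},\nu,\mathcal{I}}(\nu) = \delta(\nu,\nu) = 1.$$
From the definition of this limit, there is a constant $k_0 = k_0(\nu,\mathcal{I})$ such that $|p_{\vec{k},\nu,\mathcal{I}}(\nu) - 1| < \tfrac12$ for all integers $k > k_0$, and in particular $p_{\vec{k},\nu,\mathcal{I}}(\nu)\neq 0$ for all such $k$. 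Since a non-zero Fourier coefficient forces the form to be non-zero, this yields $\mathcal{P}_{\vec{k},\nu,\mathcal{I}} \nequiv 0$ for every $k > k_0$, which is exactly the assertion.

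I do not expect any real obstacle here: the analytic content is entirely contained in \thmref{weight}, and what remains is bookkeeping — checking that $\nu$ is a valid index and that the threshold $k_0$ is permitted to depend on $\nu$ and $\mathcal{I}$, both of which are consistent with the statement. If one wanted an alternative route, one could instead argue via the Petersson inner product, since $\langle f, \mathcal{P}_{\vec{k},\nu,\mathcal{I}}\rangle$ is a non-zero constant times the $\nu$-th Fourier coefficient of $f$, so that $\mathcal{P}_{\vec{k},\nu,\mathcal{I}}\equiv 0$ would force every coefficient of every cusp form to vanish; but the direct coefficient argument above is shorter and is the one I would carry out.
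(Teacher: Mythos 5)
Your proposal is correct and follows exactly the paper's own argument: apply \thmref{weight} with $\mu=\nu$ to get $p_{\vec{k},\nu,\mathcal{I}}(\nu)\to 1$, conclude this coefficient is non-zero for all $k$ beyond some threshold $k_0(\nu,\mathcal{I})$, and hence the series is not identically zero. No gaps; the alternative Petersson-inner-product remark is fine but unnecessary.
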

\begin{proof}
From \thmref{weight}, we know that for a totally positive $\nu\in\mathcal{O}_F^*$,
\begin{equation*}
 \lim_{k\rightarrow \infty}p_{\vec{k},\nu,\mathcal{I}}(\nu)=1.
\end{equation*}
Hence, there exist some constant $k_0>0$ such that for all $k>k_0$, we have
$$p_{\vec{k},\nu,\mathcal{I}}(\nu)\neq 0,$$
equivalently, $$\mathcal{P}_{\vec{k},\nu,\mathcal{I}}\nequiv 0,$$
which completes the proof of the theorem. 
\end{proof}
\begin{rmk}\label{rmkfinite}
More generally, the above theorem is equivalent to the following statement. \linebreak
\em{For any finite set $A \subset \mathcal{O}_F^*$ of indices and for any finite set $B$ of levels, there exist a positive constant $k_0$ such that for all $k>k_0$, we have
\begin{equation*}
\mathcal{P}_{\vec{k},\nu,\mathcal{I}}\nequiv 0,
\end{equation*}
for all $\nu\in A$ and $\mathcal{I}\in B$.}
\end{rmk}

In a particular case of the \thmref{nonvanishingw} when $F=\mathbb{Q}$, we get the following non-vanishing result of Poincar\'e series. 
\begin{cor}\label{nonvanishing corollary}
For any positive integer $m$, there exist a positive constant $k_0$ such that for all $k>k_0$,  we have
 \begin{equation*}
\mathcal{P}_{k,m,q}\nequiv 0,
\end{equation*}
where $\mathcal{P}_{k,m,q}$ be the $m$-${\rm{th}}$ Poincar\'e series of weight $k$ and of level $q$.
\end{cor}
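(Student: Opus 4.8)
The plan is to obtain Corollary~\ref{nonvanishing corollary} as the special case $F=\mathbb{Q}$, $n=1$ of Theorem~\ref{nonvanishingw}. When $F=\mathbb{Q}$ we have $\mathcal{O}_F=\mathbb{Z}$, the dual lattice $\mathcal{O}_F^*=\mathbb{Z}$, the parallel weight $\vec k$ is just the integer $k$, the ideal $\mathcal{I}$ corresponds to $q\mathbb{Z}$ for a positive integer $q$, and $\Gamma_0(\mathcal{I})=\Gamma_0(q)$ in the classical sense. Under this dictionary the Hilbert Poincar\'e series $\mathcal{P}_{\vec k,\nu,\mathcal I}$ defined by \eqref{pdefinition} literally coincides with the classical Poincar\'e series $\mathcal P_{k,m,q}$ with $m=\nu$, and its Fourier coefficients $p_{\vec k,\nu,\mathcal I}(\mu)$ become the classical $p_{m,k,q}(n)$.

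The key steps, in order, are: first, fix a positive integer $m$ and the level $q$, and observe that $m$ is a totally positive element of $\mathcal{O}_F^*=\mathbb{Z}$ (totally positivity for $n=1$ is just positivity). Second, apply Theorem~\ref{nonvanishingw} with $F=\mathbb{Q}$, $\nu=m$, $\mathcal{I}=q\mathbb{Z}$: it produces a constant $k_0>0$ such that $\mathcal P_{k,m,q}\nequiv 0$ for all $k>k_0$. That is exactly the assertion of the corollary, so essentially nothing remains to be done beyond spelling out the specialization.

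Alternatively, and perhaps more transparently, one can run the one-line argument of Theorem~\ref{nonvanishingw} directly in the classical setting: Proposition~\ref{k} (the $F=\mathbb{Q}$ instance of Theorem~\ref{weight}) gives $\lim_{k\to\infty}p_{m,k,q}(m)=\delta(m,m)=1$, hence $p_{m,k,q}(m)\neq 0$ for all sufficiently large $k$, and a nonzero Fourier coefficient forces $\mathcal P_{k,m,q}\nequiv 0$. Strictly speaking Proposition~\ref{k} is only stated for level $1$ in the excerpt, but Proposition~\ref{q} together with Theorem~\ref{weight} (or simply invoking Theorem~\ref{nonvanishingw}) covers the general $q$, so the cleanest route is the first one.

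The only ``obstacle'' here is bookkeeping rather than mathematics: one must check carefully that the normalizations in \eqref{pdefinition}, \eqref{lambdaf} and $\mu(g,z)^k$ reduce, when $n=1$ and the determinant is $1$, to the standard definitions $\mathcal P_{m,k,q}(z)=\sum_{M\in\Gamma_\infty\backslash\Gamma_0(q)}(cz+d)^{-k}e^{2\pi i m Mz}$ and that $\mathcal{O}_F^*=\mathbb{Z}$, so that the index set for $\nu$ (resp.\ $m$) matches and the claimed $k_0$ transfers verbatim. Once this identification is recorded, the corollary is immediate from Theorem~\ref{nonvanishingw}.
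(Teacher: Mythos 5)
Your proposal is correct and follows exactly the paper's route: the corollary is obtained by specializing Theorem~\ref{nonvanishingw} to $F=\mathbb{Q}$ (so $\mathcal{O}_F^*=\mathbb{Z}$, $\vec k=k$, $\mathcal{I}=q\mathbb{Z}$), where total positivity of $\nu=m$ is just positivity and the Hilbert Poincar\'e series reduces to the classical one. The bookkeeping you flag is all that is needed, and your alternative one-line argument via the limit $p_{m,k,q}(m)\to 1$ is precisely the content of the proof of Theorem~\ref{nonvanishingw} itself.
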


Note that \corref{nonvanishing corollary} generalizes an earlier result of Gaigalas \cite{gai} and gives a completely different proof. Moreover, our result is stronger than that.

In the following theorem, we show the non-vanishing of Hilbert Poincar\'e series with respect to level, when the weight is fixed.
\begin{thm}\label{nonvanishingl}
Let $\nu\in\mathcal{O}_F^*$. Then for a fixed $2<k\in \mathbb{N}^n$ and $\nu$, there exist a positive constant $n_0$ such that
\begin{equation*}
\mathcal{P}_{k,\nu,\mathcal{I}}\nequiv 0,
\end{equation*}
for all integral ideal $\mathcal{I}$ with $N(\mathcal{I})\geq n_0$. 
\end{thm}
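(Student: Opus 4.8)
The plan is to deduce this statement directly from \thmref{level}, in complete parallel with the way \thmref{nonvanishingw} was obtained from \thmref{weight}: the orthogonality relation for the Fourier coefficients already contains all the substance, and only a specialization of the index is needed.

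First I would fix a totally positive $\nu\in\mathcal{O}_F^*$ (this is the implicit hypothesis, since the Poincar\'e series $\mathcal{P}_{k,\nu,\mathcal{I}}$ is only defined for $\nu\gg 0$) together with a weight $k=(k_1,\dots,k_n)\in\mathbb{N}^n$ satisfying $k_j>2$ for all $j$. Applying \thmref{level} with $\mu=\nu$ yields
\begin{equation*}
\lim_{\mathcal{N}(\mathcal{I})\rightarrow\infty}p_{k,\nu,\mathcal{I}}(\nu)=\delta(\nu,\nu)=1.
\end{equation*}
From the definition of the limit, this forces the existence of a positive constant $n_0=n_0(k,\nu)$ such that $|p_{k,\nu,\mathcal{I}}(\nu)-1|<1/2$, and in particular $p_{k,\nu,\mathcal{I}}(\nu)\neq 0$, for every non-zero integral ideal $\mathcal{I}\subseteq\mathcal{O}_F$ with $\mathcal{N}(\mathcal{I})\geq n_0$. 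Since $p_{k,\nu,\mathcal{I}}(\nu)$ is one of the Fourier coefficients of $\mathcal{P}_{k,\nu,\mathcal{I}}$, a non-zero value of it immediately gives $\mathcal{P}_{k,\nu,\mathcal{I}}\nequiv 0$, which is precisely the claim.

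I do not expect any real obstacle here: the only analytic input (the dominated convergence argument bounding the Poincar\'e series by $G_k(z)$ and the term-by-term vanishing coming from $\Delta_{I}(M)=0$ once $\mathcal{N}(\mathcal{I})>\mathcal{N}(\gamma\mathcal{O}_F)$) is entirely absorbed into \thmref{level}. The points worth stating explicitly are that $\nu$ must be totally positive for $\mathcal{P}_{k,\nu,\mathcal{I}}$ to make sense, and that the threshold $n_0$ depends on both $k$ and $\nu$. As in \rmkref{rmkfinite}, I would also note that for any finite set of totally positive indices $\nu$ (and any fixed weight $k$) one obtains a single uniform threshold by taking the maximum of the finitely many $n_0(k,\nu)$.
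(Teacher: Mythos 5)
Your proposal is correct and follows exactly the paper's own route: the paper deduces the theorem by applying \thmref{level} with $\mu=\nu$ and arguing as in \thmref{nonvanishingw}, precisely as you do. Your explicit remarks about total positivity of $\nu$ and the dependence of $n_0$ on $k$ and $\nu$ are accurate and, if anything, slightly more careful than the paper's one-line proof.
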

\begin{proof}
A direct application of \thmref{level} and similar arguments as in the proof of \thmref{nonvanishingw}, give the required result.
\end{proof}
\begin{rmk}
Similar to the \rmkref{rmkfinite}, we point out that the above theorem holds good for finitely many weights as well as finitely many indices.
\end{rmk}

Again if the field $F$ is $\mathbb{Q}$ then using \thmref{nonvanishingl}, we have the following non-vanishing result for Poincar\'e series $\mathcal{P}_{k,m,q}$.
\begin{cor}\label{nonvanishing corollary1}
For any positive integer $m$ and for a fixed weight $k\in \mathbb{N}$, there exist a positive constant $q_0$ such that for all $q>q_0$, we have
 \begin{equation*}
\mathcal{P}_{k,m,q}\nequiv 0.
\end{equation*}
\end{cor}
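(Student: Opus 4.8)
The plan is to obtain this statement as the special case $F=\mathbb{Q}$ of \thmref{nonvanishingl}, so the only real work is to check that every Hilbert-modular object occurring there degenerates to its classical counterpart. First I would take $F=\mathbb{Q}$, so that $n=1$, $\mathcal{O}_F=\mathbb{Z}$, the unique real embedding is the identity, and consequently $N(\alpha)=\alpha$ and $\mathrm{tr}(\alpha)=\alpha$ for $\alpha\in\mathbb{Q}$. The automorphy factor $\mu(M,z)^{-k}$ becomes the usual $(\det M)^{k/2}(cz+d)^{-k}$, the non-zero integral ideals of $\mathbb{Z}$ are precisely the $q\mathbb{Z}$ with $q\in\mathbb{N}$, one has $\mathcal{N}(q\mathbb{Z})=q$, and $\Gamma_0(q\mathbb{Z})$ is the classical congruence subgroup $\Gamma_0(q)\subset SL_2(\mathbb{Z})$. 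For $\Gamma=\Gamma_0(q\mathbb{Z})$ the lattice and its dual are $\Lambda_\Gamma=\Lambda_\Gamma^*=\mathbb{Z}$, so a totally positive index $\nu\in\mathcal{O}_F^*$ is simply a positive integer $m$. Comparing the definition \eqref{pdefinition} with the classical one then shows that $\mathcal{P}_{k,m,q\mathbb{Z}}$ is exactly the $m$-th Poincar\'e series $\mathcal{P}_{k,m,q}$ of weight $k$ and level $q$ in $S_k(\Gamma_0(q))$.

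Next I would apply \thmref{nonvanishingl} with this $F$, the fixed weight $k$ (which, as inherited from \thmref{level}, must satisfy $k>2$), and $\nu=m$: it provides a constant $n_0>0$ such that $\mathcal{P}_{k,m,\mathcal{I}}\nequiv 0$ for every integral ideal $\mathcal{I}$ with $\mathcal{N}(\mathcal{I})\geq n_0$. Specializing to $\mathcal{I}=q\mathbb{Z}$ and putting $q_0=n_0$ yields $\mathcal{P}_{k,m,q}\nequiv 0$ for all $q>q_0$. Alternatively, and more directly, \thmref{level} gives $\lim_{q\to\infty}p_{k,m,q}(m)=\delta(m,m)=1$, so the $m$-th Fourier coefficient of $\mathcal{P}_{k,m,q}$ is non-zero once $q$ is large enough, which already forces $\mathcal{P}_{k,m,q}\nequiv 0$.

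I do not expect any genuine obstacle: the mathematical substance lies entirely in \thmref{level}/\thmref{nonvanishingl}, and the corollary is a matter of unwinding notation. The two points deserving a line of care are the identification $\Lambda_\Gamma^*=\mathbb{Z}$ (so that the totally positive index $\nu$ becomes an ordinary positive integer $m$) and the hypothesis $k>2$ silently required by \thmref{level}. It is also worth contrasting the conclusion with Gaigalas's theorem \cite{gai}: for a fixed index $m$ his argument only produces infinitely many even weights $k$ with $\mathcal{P}_{k,m,q}\nequiv 0$, whereas here, for a single fixed weight, non-vanishing holds for all sufficiently large level $q$.
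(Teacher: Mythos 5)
Your proposal is correct and follows exactly the route the paper intends: specialize Theorem~\ref{nonvanishingl} (equivalently Theorem~\ref{level}) to $F=\mathbb{Q}$ and unwind the notation so that $\mathcal{P}_{k,m,q\mathbb{Z}}$ becomes the classical $\mathcal{P}_{k,m,q}$. Your remark that the hypothesis $k>2$ is silently inherited from Theorem~\ref{level} (though the corollary's statement only says $k\in\mathbb{N}$) is a fair and worthwhile precision.
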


\section{concluding remark}\label{concluding}
It would be interesting to get the analogous result of Rankin, stated in the introduction, for Hilbert Poincar\'e series.
%Analogous to Rankin's result stated in the introduction, it will be interesting to get the analogous result for Hilbert Poincar\'e series. 
Although our results are weaker than the expected analogous Rankin's result, but the beauty of our proof is that it uses only elementary techniques.
% Means, for the proof of non-vanishing of Hilbert Poincar\'e series, 
We neither used the explicitly expression for the Fourier coefficients of Hilbert Poincar\'e series nor any estimates for corresponding Bessel function or the generalized Kloosterman sums. 

Note that the Fourier coefficients of Siegel Poincar\'e series also satisfy orthogonality relations, proved by Kowalski et. al. \cite{saha}. From this, one can obtained the similar results like \thmref{nonvanishingw} and \thmref{nonvanishingl} for Siegel Poincar\'e series. We also expect that by following the method of this paper, one can prove the analogous non-vanishing results for Poincar\'e series of half-integral weight modular forms.

\begin{acknowledgements} 
The author is grateful to her adviser Prof. Brundaban Sahu for his continuous support and useful discussions. She also express her gratitude to Prof. Abhishek Saha for  valuable comments on this paper. The author is greatly indebted to the anonymous referee
for his/her very thorough reading of the manuscript,
also resulting in improved readability of the present version.
\end{acknowledgements}

\end{document}